\documentclass[11pt,reqno]{amsart}

\usepackage[T1]{fontenc}
\usepackage{amsmath, amssymb, amsthm, amsfonts}
\usepackage{mathtools}
\usepackage[dvipsnames]{xcolor}

\definecolor{DartmouthGreen}{HTML}{00693E}
\definecolor{DartmouthDarkGreen}{HTML}{004B23}
\definecolor{DartmouthLightGreen}{HTML}{4BAF7A}

\usepackage[
  colorlinks=true,
  hyperindex,
  linkcolor=DartmouthDarkGreen,
  citecolor=DartmouthGreen,
  urlcolor=DartmouthLightGreen,
  pagebackref=true
]{hyperref}

\usepackage{fullpage}
\usepackage{tikz,tikz-cd}
\usetikzlibrary{graphs, graphs.standard,positioning}

\usepackage{colortbl, array,booktabs,tabularx}
\usepackage{enumerate}
\usepackage[shortlabels]{enumitem}

\usepackage{calrsfs}
\DeclareMathAlphabet{\pazocal}{OMS}{zplm}{m}{n}
\usepackage{newcent,fouriernc}

\newtheorem{theorem}{Theorem}[section]
\newtheorem{lemma}[theorem]{Lemma}

\newtheorem{proposition}[theorem]{Proposition}
\newtheorem{conjecture}[theorem]{Conjecture}

\theoremstyle{definition}

\newtheorem{example}[theorem]{Example}

\newtheorem{remark}[theorem]{Remark}

\newtheorem{theoremalpha}{Theorem}
\newtheorem{corollaryalpha}[theoremalpha]{Corollary}

\newcommand{\uu}{\mathbf{u}}
\newcommand{\vv}{\mathbf{v}}
\newcommand{\ww}{\mathbf{w}}
\newcommand{\zz}{\mathbf{z}}
\newcommand{\pp}{\mathbf{p}}
\newcommand{\qq}{\mathbf{q}}
\newcommand{\cc}{\mathbf{c}}
\newcommand{\zero}{\mathbf{0}}

\newcommand{\KK}{\mathbb{K}}

\newcommand{\RR}{\mathbb{R}}
\newcommand{\ZZ}{\mathbb{Z}}

\newcommand{\cB}{\mathcal{B}}

\newcommand{\mfm}{\mathfrak{m}}

\DeclareMathOperator{\gr}{gr}

\DeclareMathOperator{\Sym}{Sym}
\DeclareMathOperator{\vol}{vol}

\numberwithin{equation}{section}

\title{Eventual Nonstandard Koszulness Fails for Veronese Subrings of Weighted Polynomial Rings}

\author{Juliette Bruce}
\address{Department of Mathematics, Dartmouth College, Hanover, NH}
\email{\href{mailto:juliette.bruce@dartmouth.edu}{juliette.bruce@dartmouth.edu}}
\urladdr{\url{https://www.juliettebruce.xyz}}

\subjclass[2020]{14M25,13D02}

\begin{document}


\maketitle

\section{Introduction}

An organizing heuristic throughout projective geometry is that defining equations, syzygies, and other algebraic properties of varieties embedded in projective space simplify as the positivity of the embedding line bundle increases. One instance of this heuristic is the Koszul property. If $\KK$ is a field, a standard $\ZZ$-graded $\KK$-algebra $A$ is \emph{Koszul} if the minimal graded $A$-free resolution of the residue field is linear \cite{priddy70}. Backelin proved that if $A$ is a finitely generated standard $\ZZ$-graded $\KK$-algebra, then the $e$-th Veronese subring $A^{(e)} \coloneqq \bigoplus_{k\geq0} A_{ek}$ is Koszul, in the sense of Priddy, for all sufficiently large $e$ \cite{backelin86}.  Eisenbud, Reeves, and Totaro strengthened this, showing that if $\KK$ is infinite, the defining ideal of $A^{(e)}$ has a quadratic Gr\"{o}bner basis for all $e\gg0$ \cite{ERT94}. 

Recent work extends this positivity heuristic to nonstandard and multigraded coordinate rings. Geometrically, these encode subvarieties of weighted projective spaces and more general toric varieties, respectively. Let $A$ be a finitely generated positively $\ZZ$-graded $\KK$-algebra with $A_0=\KK$, and set $\mfm=A_+$. Following Herzog, Reiner, and Welker, we say $A$ is a \emph{nonstandard Koszul algebra} if  and only if the associated graded ring of  $A$ with respect to $\mfm$:
\[
\gr_{\mfm}(A) \coloneqq \bigoplus_{k\geq0} \mfm^{k}/\mfm^{k+1}=\KK\oplus \mfm/\mfm^{2}\oplus \mfm^{2}/\mfm^{3}\oplus \cdots,
\]
is Koszul as a standard $\ZZ$-graded $\KK$-algebra \cite[Definition~5.1]{HRW98}. In particular, $\gr_{\mfm}(A)$ must have a presentation where the ideal of relations is minimally generated by quadrics \cite[Remark~2.6(1)]{CDR13}. Davis, Erman, and Martinova conjecture that an analogue of Backelin's theorem holds for nonstandard $\ZZ$-graded polynomial rings. Let $\ww\in \ZZ^n_{\geq1}$ and $S=\KK[x_1,\ldots,x_n]$ be a $\ZZ$-graded polynomial ring where $\deg(x_i) = w_i$.

\begin{conjecture}\cite[Conjecture~1.5]{DM25}\label{con:DEM}
	If $S$ is a nonstandard graded polynomial ring and $S^{(e)}$ is the $e$-th Veronese subring, then $S^{(e)}$ is a nonstandard Koszul algebra for $e$ sufficiently large. 
\end{conjecture}

In contrast to the standard-graded case the qualifier ``for $e$ sufficiently large'' is necessary. For example, if $S=\KK[x_1,x_2,x_3]$ and $w=(3,4,5)$ then $S^{(15)}$ is not nonstandard Koszul \cite[Remark~1.6]{DM25}. Chase, Fiorindo, Holleben, Marangone, Nguy{\~{\^e}}n, Seceleanu, and Singh produced two parametrized families generalizing this example \cite[Proposition~4.3]{CFHMNSS26}.  For each fixed weight vector, however, their constructions specify only one bad value of $e$ and therefore do not disprove eventual nonstandard Koszulness. Our first result gives a fixed four-variable grading with bad Veronese subrings in an infinite arithmetic progression disproving Conjecture~\ref{con:DEM}.

\begin{theoremalpha}\label{thm:main-ex}
	Let $S=\KK[x_1,x_2,x_3,x_4]$ be the nonstandard $\ZZ$-graded polynomial ring where $\deg(x_1)=1, \deg(x_2)=4, \deg(x_3)=7, \deg(x_4)=9$. If $e \geq 21$ and $e \equiv 3 \pmod{9}$ then $S^{(e)}$ is not nonstandard Koszul. 
\end{theoremalpha}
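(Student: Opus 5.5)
The plan is to reduce everything to a statement about the affine semigroup $H_e=\{(a,b,c,d)\in\ZZ^4_{\geq0} : a+4b+7c+9d\equiv 0 \pmod e\}$, so that $A=S^{(e)}=\KK[H_e]$. For a monomial algebra, $\gr_{\mfm}(A)$ is again spanned by the monomials $\xx^{\mathbf h}$, $\mathbf h\in H_e$. Here $\mathbf h$ is placed in degree $\operatorname{ord}(\mathbf h)$, the maximal length of a factorization of $\mathbf h$ into minimal generators of $H_e$. Multiplication is $\xx^{\mathbf h}\cdot\xx^{\mathbf h'}=\xx^{\mathbf h+\mathbf h'}$ if $\operatorname{ord}(\mathbf h+\mathbf h')=\operatorname{ord}(\mathbf h)+\operatorname{ord}(\mathbf h')$, and $0$ otherwise. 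This ring is $\ZZ^4$-graded by exponent vectors.

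Since $\gr_{\mfm}(A)$ is $\ZZ^4$-graded, I would use the standard fact that a Koszul algebra has quadratic defining ideal. I would also use that this property is inherited by multigraded algebra retracts, for example the subalgebra supported on a face of the cone, or on the monomials dividing a fixed monomial in a suitable sense. It therefore suffices to exhibit a minimal relation of $\gr_{\mfm}(A)$ of degree $\geq 3$. Equivalently, I want a multidegree $\mathbf h$ with $\operatorname{ord}(\mathbf h)=3$ where the "factorization graph" fails to be connected via quadratic moves. The precise condition is that the span of degree-3 monomials in generators with sum $\mathbf h$ is not cut out by quadratic relations in that multidegree.

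The key steps, in order, are as follows.

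\begin{enumerate}
\item Write $e=9k+3$ with $k\geq 2$ (this is where $e\geq 21$ enters). Determine the relevant minimal generators of $H_e$. These include the degree-$e$ monomials such as $x_1^3x_4^k$, $x_2^3x_4^{k-1}$ (using $12=9+3$) and $x_3x_2^{\ldots}$-type monomials. They also include some \emph{indecomposable} monomials of weighted degree $2e$. The natural candidate is one built from $x_3$ and $x_4$ alone, e.g. $x_3^{c}x_4^{d}$ with $7c+9d=2e$, which cannot be split into two degree-$e$ monomials because $7c'+9d'=e$ has no suitable solution with $c'\leq c$, $d'\leq d$. The congruence $e\equiv 3\pmod 9$ is what should make these $x_3,x_4$ Diophantine conditions work uniformly in $k$.
\item Find a multidegree $\mathbf h$ of weighted degree $3e$ having factorizations as a product of three degree-$e$ generators $g_1g_2g_3$, and also as $g\cdot G$ with $G$ an indecomposable degree-$2e$ generator. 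Then $\operatorname{ord}(\mathbf h)=3$. Also arrange that every partial product $g_ig_j$ has order exactly $2$, so that no quadratic relation kills it, and that no quadratic binomial relation connects the distinct length-3 factorizations of $\mathbf h$.
\item Conclude that the resulting element of $\ker(\Sym(\mfm/\mfm^2)\to\gr_{\mfm}(A))$ in multidegree $\mathbf h$ is a binomial or monomial relation of degree $3$ not in the ideal generated by quadrics. Then pass to the retract to make the minimality clean.
\end{enumerate}

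The main obstacle is step 2. I must certify, for all $k\geq 2$ simultaneously, that the list of factorizations of $\mathbf h$ is exactly what I claim. In particular, no unexpected generator of degree $e$ or $2e$ may provide a quadratic connection. I would handle this by choosing $\mathbf h$ with small support, mostly in $x_3,x_4$ plus a bounded amount of $x_1,x_2$, so that the enumeration reduces to solving a few linear Diophantine equations $a+4b+7c+9d\in\{e,2e\}$ under coordinatewise bounds. Each equation is checked modulo $9$ using $e\equiv 3$, with $k$ entering only in the $x_4$-exponent.
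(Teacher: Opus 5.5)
Your framework is sound. $\gr_{\mfm}(S^{(e)})$ is the monomial algebra you describe, $J$ is $\ZZ^4$-graded, and a Koszul algebra is quadratic. So it suffices to find a multidegree $\mathbf h$ of order $3$ with $J_{(\mathbf h,3)}\not\subseteq (R_1J_2)_{(\mathbf h,3)}$. The gap is that Step~2, which is the whole content of the theorem, is never carried out, and the mechanism you propose for it is beside the point.

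\textbf{Why degree-$2e$ generators are irrelevant.} If $\mathbf h$ has weighted degree $3e$, every length-$3$ factorization of $\mathbf h$ uses only degree-$e$ generators. Likewise, every quadric of $J$ in multidegree $\mathbf h-g$ involves only degree-$e$ generators. A length-$2$ factorization $g\cdot G$ through an indecomposable degree-$2e$ generator merely puts the monomial $T_gT_G$ into $J_2$ in multidegree $\mathbf h$. Since $R_1T_gT_G$ lives in other multidegrees, it neither creates nor obstructs a cubic generator at $\mathbf h$. Your $x_3,x_4$ candidate also fails in the basic case: the solution of $7c+9d=2e$ with $c=6$ is $x_3^6x_4^{2k-4}=(x_3^3x_4^{k-2})^2$, which is decomposable.

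\textbf{What is actually needed.} In an order-$3$ multidegree $\mathbf h$, the number of minimal cubic generators of $J$ is one less than the number of connected components of the following graph: vertices are the length-$3$ factorizations of $\mathbf h$, and two are adjacent when they share a factor. You give no way to make this graph disconnected, let alone uniformly in $k$. ``Choose $\mathbf h$ with small support and check Diophantine equations'' is a search strategy, not an argument.

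\textbf{The missing idea is midpoint rigidity, which isolates a vertex.} Take $\mathbf h=3\cc$, where $f=x^{\cc}$ has degree $e$ and is the only degree-$e$ monomial dividing $f^2$. Any factorization sharing a factor with $\{f,f,f\}$ has the form $f\cdot p\cdot q$ with $pq=f^2$, so it equals $\{f,f,f\}$. Hence any second factorization $f^3=g_1g_2g_3$ lies in a different component.

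With $e=9k+3$ and $k\geq 2$, take
$f=x_1x_2x_3x_4^{k-1}$, $g_1=x_1^3x_4^{k}$, $g_2=x_2^3x_4^{k-1}$, $g_3=x_3^3x_4^{k-2}$.
You already wrote down $g_1$ and $g_2$. The condition $k\geq 2$ is exactly what makes $g_3$ exist, and this is where $e\geq 21$ enters.

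Rigidity is a short congruence argument:
\begin{itemize}
\item Writing $p,q=\cc\pm\delta$ forces $\delta_1,\delta_2,\delta_3\in\{-1,0,1\}$ and $\delta_1+4\delta_2+7\delta_3+9\delta_4=0$.
\item Reducing mod $3$ gives $\delta_1+\delta_2+\delta_3\in\{0,\pm 3\}$.
\item The values $\pm 3$ lead to $\pm 12+9\delta_4=0$, which is impossible.
\item Substituting $\delta_1=-\delta_2-\delta_3$ gives $\delta_2+2\delta_3+3\delta_4=0$, so $\delta_2\equiv\delta_3 \pmod 3$ and hence $\delta_2=\delta_3$.
\item Then $\delta_1=-2\delta_2\in\{-1,0,1\}$ forces $\delta_2=0$, and so $\delta=0$.
\end{itemize}

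This gives the minimal cubic generator $T_f^3-T_{g_1}T_{g_2}T_{g_3}$ directly. No degree-$2e$ generators and no retracts are needed. Note also that the span of ``monomials dividing a fixed monomial'' is not closed under multiplication, so it would not give a retract in any case.
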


The conjecture was nevertheless supported by several general results:
\begin{enumerate}
	\item For every nonstandard $\ZZ$-graded polynomial ring $S$ there are infinitely many $e$ for which $S^{(e)}$ is a nonstandard Koszul algebra. If $\ell$ is the least common multiple of $w_1,\ldots,w_n$ consider the lattice polytope $P \coloneqq \left\{ \alpha \in \RR^n_{\geq0} \;\; \big| \;\; \ww\cdot \alpha = \ell \right\}$, which is full dimensional in the hyperplane defined by $\ww\cdot \alpha = \ell$.  Letting $c=\max\{1,n-2\}$ the dilation $cP$ is normal by \cite[Theorem~2.2.12]{CLS11}. Since the monomials in $S_{c\ell}$ are in bijection with the lattice points of $cP$ normality implies $S^{(c\ell)}$ is generated in degree one. So $S^{(c\ell d)}=(S^{(c\ell)})^{(d)}$ is Koszul for  $d\gg1$ by Backelin.  
	\item If $n=1$ or $2$ then $S^{(e)}$ is nonstandard Koszul for all $e\geq1$. The one-variable case is immediate. When $n=2$, $S^{(e)}$ is a two-dimensional normal affine semigroup ring, and so the result follows from \cite[Proposition~5.3]{HRW98}. The recent work of Chase et al. strengthens this by giving an explicit determinantal presentation and Gr\"obner basis \cite[Theorem~3.5]{CFHMNSS26}.
	\item More generally, Herzog, Reiner, and Welker showed that a Cohen-Macaulay affine semigroup ring of minimal multiplicity is nonstandard Koszul \cite[Theorem~5.2]{HRW98}. Thus, minimal multiplicity gives a useful sufficient condition for $S^{(e)}$ to be nonstandard Koszul in every dimension. For $n\geq 3$ there are gradings for which $S^{(e)}$ does not have minimal multiplicity. 
	\item A proof of Backelin's theorem uses that truncations of sufficiently high degree of a standard $\ZZ$-graded $\KK$-algebra have linear resolutions. Davis and Martinova prove that $S_{\geq d}$ is a nonstandard Koszul module, which constrains the resolution of $S_{\geq d}$ \cite[Theorem~A]{DM25}. 
\end{enumerate}

Theorem~\ref{thm:main-ex} shows that these positive results do not extend from sufficiently divisible Veronese parameters to all sufficiently large
ones. Its proof is based on finding an explicit minimal cubic generator in the presentation of $\gr_{\mfm}(S^{(e)})$ through an elementary combinatorial construction. For a weight vector $\ww \in \ZZ_{\geq1}^{n}$ let 
\[
\Lambda_e(\ww)\coloneqq \left\{\vv\in\ZZ_{\geq0}^n\;\; \big| \;\; \ww\cdot\vv=e\right\}.
 \]
For $\cc \in \Lambda_{e}(\ww)$, a \emph{nontrivial barycenter decomposition} of $\cc$ is an equality $3\cc=\uu+\vv+\zz$ where $\uu,\vv,\zz \in \Lambda_{e}(\ww)$ and $(\uu,\vv,\zz)\neq(\cc,\cc,\cc)$. We say that the point $\cc$ is \emph{midpoint-rigid} if $\pp+\qq=2\cc$ for $\pp,\qq\in \Lambda_{e}(\ww)$ implies that $\pp=\qq=\cc$. We call a tuple $(\cc;\uu,\vv,\zz)$ consisting of a midpoint-rigid point $\cc \in \Lambda_{e}(\ww)$ and a nontrivial barycenter decomposition $3\cc=\uu+\vv+\zz$ a \emph{cubic obstruction configuration}. Under the bijection between degree-$e$ monomials of $S$ and points of $\Lambda_e(\ww)$, a cubic obstruction configuration corresponds to monomials $x^{\cc}, x^{\uu}, x^{\vv}, x^{\zz} \in S$ such that $(x^{\cc})^{3}=x^{\uu}x^{\vv}x^{\zz}$. This relation gives a cubic equation in the tangent cone of $S^{(e)}$. The midpoint-rigidity of $\cc$ forces this cubic to be a minimal generator, implying $\gr_{\mfm}(S^{(e)})$ is not Koszul. 
\begin{example}
	Consider the setup of Theorem~\ref{thm:main-ex}, where $S=\KK[x_1,x_2,x_3,x_4]$ with $\deg(x_1)=1, \deg(x_2)=4, \deg(x_3)=7, \deg(x_4)=9$ and $e=21$. The $30$ indecomposable monomials in $\mfm \setminus \mfm^{2}$, including every monomial of degree 21, are generators for  $\gr_{\mfm} (S^{(21)})$. Writing $\Gamma$ for this set gives the canonical presentation $\pi:\KK[T_{h} \; | \; h \in \Gamma] \to \gr_{\mfm}(S^{(21)})$. The vectors $\cc=(1,1,1,1)$, $\uu=(3,0,0,2)$, $\vv=(0,3,0,1)$, $\zz=(0,0,3,0)$ form a cubic obstruction configuration in $\Lambda_{21}(\ww)$. The corresponding monomials in $S_{21}$ are $f = x_1x_2x_3x_4$, $g_{1} = x_{1}^{3}x_{4}^2$, $g_{2} = x_{2}^{3}x_{4}$, and $g_{3} = x_3^{3}$. The relation $f^{3}=g_{1}g_{2}g_{3}$ gives a cubic $C=T_f^3-T_{g_1}T_{g_2}T_{g_3} \in \ker(\pi)$, which is a minimal generator, implying $\gr_{\mfm}(S^{(21)})$ is not Koszul.  In fact $\ker(\pi)$ has $335$ minimal generators, of which this is the unique nonquadratic one.
\end{example}

 This construction readily generalizes to more than four variables. 

\begin{corollaryalpha}\label{cor:more-vars}
	For every $n\geq4$ there exists a positively $\ZZ$-graded polynomial ring with $n$-variables  for which Conjecture~\ref{con:DEM} fails. 
\end{corollaryalpha}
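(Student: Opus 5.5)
The plan is to reduce Corollary~\ref{cor:more-vars} to Theorem~\ref{thm:main-ex} by adding variables of large weight that cannot appear in any relevant monomial. For $n\geq 4$, take weights $\ww=(1,4,7,9,w_5,\ldots,w_n)$ and the progression $e\equiv 3\pmod 9$, $e\geq 21$, as in Theorem~\ref{thm:main-ex}. Padding the four-variable vectors $\cc=(1,1,1,1)$, $\uu=(3,0,0,2)$, $\vv=(0,3,0,1)$, $\zz=(0,0,3,0)$ with zeros gives the configuration for $e=21$. For general $e$, I would reuse the configuration the proof of Theorem~\ref{thm:main-ex} produces for that $e$, padded by zeros. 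The identity $3\cc=\uu+\vv+\zz$ and membership in $\Lambda_e(\ww)$ are unchanged.

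The key point is that midpoint-rigidity of the padded $\cc$ must persist in $n$ variables. Suppose $\pp+\qq=2\cc$ with $\pp,\qq\in\Lambda_e(\ww)$. The coordinates of $\cc$ in positions $5,\ldots,n$ are $0$, and $\pp,\qq\geq 0$. Hence $\pp$ and $\qq$ also vanish there, so they lie in the four-variable $\Lambda_e(1,4,7,9)$. Rigidity in four variables then gives $\pp=\qq=\cc$. The extra weights therefore do not matter for rigidity. A natural choice is still $w_i=1$, or any positive values.

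Next I would invoke the general mechanism the paper uses for Theorem~\ref{thm:main-ex}: a cubic obstruction configuration makes $T_{x^\cc}^3-T_{x^\uu}T_{x^\vv}T_{x^\zz}$ a minimal cubic generator of $\ker(\pi)$, so $\gr_\mfm(S^{(e)})$ is not Koszul. This holds provided the argument depends only on the configuration being cubic-obstructing in $\Lambda_e(\ww)$, which I expect the paper proves as a general lemma valid for any weight vector.

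The main obstacle is confirming that the minimality argument is truly general. It must use only midpoint-rigidity of $\cc$, and in particular must not rely on the degree-$e$ monomials being the only generators or on special features of $(1,4,7,9)$. One example of such a concern is that $x^\cc$ is a minimal generator of $S^{(e)}$. This holds automatically, since every degree-$e$ monomial is indecomposable in $S^{(e)}$. If the general lemma needed an extra hypothesis, I would verify it for the padded configuration by the same zero-coordinate argument as above. Since $e$ ranges over an infinite progression, Conjecture~\ref{con:DEM} fails.
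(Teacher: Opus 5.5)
Your proposal is correct and is essentially the paper's proof: keep the weights $(1,4,7,9)$, add arbitrary positive weights, reuse the configuration from Theorem~\ref{thm:main-ex} padded by zeros, and apply Lemma~\ref{lem:obstruction}, which is indeed stated for arbitrary weights. Your observation that $\pp+\qq=2\cc$ with $\pp,\qq\geq 0$ forces $\pp$ and $\qq$ to vanish wherever $\cc$ does is exactly the detail the paper leaves implicit in ``going through the argument.''
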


One might hope that the counterexamples given in Theorem~\ref{thm:main-ex} are somewhat sparse, and perhaps Conjecture~\ref{con:DEM} holds generically in some sense. We show that this is unlikely as we extend the method that led to the counterexample above into a positive-density family of counterexamples.

\begin{theoremalpha}\label{thm:density}
	Let $\cB \subset \ZZ^4_{\geq1}$ be the set of primitive degrees $\ww \in \ZZ^{4}_{\geq1}$ such that Conjecture~\ref{con:DEM} fails for $S=\KK[x_1,x_2,x_3,x_4]$ where $\deg(x_i)=w_i$. Then 
	\[
	\liminf_{N\to \infty} \frac{\#\left(\cB \cap [1,N]^{4}\right)}{N^{4}} \geq \frac{3}{40\pi^2}\approx 0.007599.
	\]
\end{theoremalpha}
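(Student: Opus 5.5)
The plan is to find an open polyhedral cone $R\subset\RR^4_{>0}$ and an arithmetic condition such that every $\ww\in R\cap\ZZ^4$ satisfying it admits a cubic obstruction configuration in $\Lambda_e(\ww)$ for infinitely many $e$. The target constant suggests what these should be. Since $\frac{3}{40\pi^2}=\frac{6}{\pi^2}\cdot\frac{1}{80}$, I expect the arithmetic condition to be $\gcd(w_i,w_j)=1$ for one specific pair, which has density $6/\pi^2$ and already forces primitivity. I expect $R$ to be a cone cut out by a few linear inequalities with $\vol(R\cap[0,1]^4)=1/80$. Both the cone and the coprime pair should be read off from a generalization of the example $\cc=(1,1,1,1)$, $\uu=(3,0,0,2)$, $\vv=(0,3,0,1)$, $\zz=(0,0,3,0)$ for $\ww=(1,4,7,9)$. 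In that example the barycenter relation imposes linear equalities on $\ww$, which is a measure-zero condition. The generalization must therefore replace those equalities by inequalities, and replace exact solvability by solvability of a linear Diophantine equation in two coprime weights.

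\emph{Step 1 (construction).} Write $\ww=(w_1,w_2,w_3,w_4)$, with $w_1,w_2$ the coprime pair for concreteness. Take $\cc$ to have a large part supported on $x_1,x_2$ and small entries (mostly $0$ or $1$) on $x_3,x_4$. Take $\zz$ to be a pure power-like vector concentrated on $x_3$ or $x_4$, as $(0,0,3,0)$ was, and let $\uu,\vv$ carry the rest. The conditions $\ww\cdot\uu=\ww\cdot\vv=e$ then reduce to a single condition of the form $w_1\alpha+w_2\beta=m$. Here $m$ is a linear expression in $\ww$ and $e$, and $\alpha,\beta\ge0$ are bounded by the entries of $3\cc$. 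Coprimality of $w_1,w_2$ solves this congruence for every $e$ in a suitable residue class mod $w_1w_2$. The inequalities defining $R$ are exactly those guaranteeing that the solution has $0\le\alpha,\beta$ in the required window, so that $\uu,\vv,\zz\in\ZZ^4_{\ge0}$. The residue class then supplies an infinite arithmetic progression of $e$, as in Theorem~\ref{thm:main-ex}, with the parameters of $\cc$ growing linearly in $e$.

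\emph{Step 2 (midpoint-rigidity).} Rigidity of $\cc$ is equivalent to the nonexistence of a nonzero $\mathbf d\in\ZZ^4$ with $\ww\cdot\mathbf d=0$ and $|d_i|\le c_i$ for all $i$. Coordinates where $c_i=0$ force $d_i=0$. On coordinates with $c_i=1$ one has $d_i\in\{-1,0,1\}$, and size inequalities in $R$ (e.g.\ $w_4>w_3+\dots$ type comparisons) will rule out signed relations. The large coordinates on $x_1,x_2$ are the danger, since $w_2d_1=w_1d_2$ has solutions once $c_1\ge w_2$. I would try to keep $c_1<w_2$ (or $c_2<w_1$), which is compatible with Step 1 because $\alpha$ can be reduced mod $w_2$. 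Combined with coprimality, this forces $d_1=d_2=0$ whenever the contributions from $x_3,x_4$ are too small to compensate. That smallness is again an inequality defining $R$. I expect this step to be the main obstacle. The window needed for nonnegativity in Step 1 and the bounds needed for rigidity pull in opposite directions, and reconciling them is what pins down $R$ and hence the constant $1/80$.

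\emph{Step 3 (minimality and counting).} Given a cubic obstruction configuration, I will show that $T_{\cc}^3-T_{\uu}T_{\vv}T_{\zz}$ is a minimal cubic generator of $\ker\pi$. Rigidity means $T_{\cc}^2$ appears in no other quadratic binomial of the tangent cone relations, so the cubic is not in the ideal generated by quadrics, and $\gr_\mfm(S^{(e)})$ is not Koszul. This is the same argument as in Theorem~\ref{thm:main-ex}, which I take as established. Finally, I count $\#\{\ww\in R\cap[1,N]^4:\gcd(w_1,w_2)=1\}$. Standard lattice-point counting with Möbius inversion over $\gcd(w_1,w_2)$ gives $\frac{6}{\pi^2}\vol(R\cap[0,1]^4)N^4+o(N^4)$, which yields the stated $\liminf$. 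Boundary effects near the walls of $R$, and the finitely many small $\ww$ where the progression of $e$ must start late, contribute only $o(N^4)$.
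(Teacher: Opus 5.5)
\textbf{There is a genuine gap.} The proposal never builds a configuration or checks midpoint-rigidity; you flag Step 2 yourself as the expected obstacle. More seriously, the shape you predict for the good set, a polyhedral cone $R$ with $\vol(R\cap[0,1]^4)=1/80$ cut by one coprimality condition, cannot come out of the mechanism you describe.

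\emph{Step 1.} You solve $w_1\alpha+w_2\beta=(\cdots)$ and reduce $\alpha$ modulo $w_2$, so $\alpha\equiv w_1^{-1}(\cdots)\pmod{w_2}$. Whether this lands in the required window is a condition on a residue modulo $w_2$, not a linear inequality in $\ww$.

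\emph{Step 2.} Say $c_2$ grows, $c_3=c_4=1$ and $c_1<w_2$. Then the growing coordinate absorbs any multiple of $w_2$, so your size comparisons do not apply. Rigidity is equivalent to $\|w_1^{-1}(w_3d_3+w_4d_4)\|_{w_2}>c_1$ for all $(d_3,d_4)\in\{-1,0,1\}^2\setminus\{(0,0)\}$. For fixed $(w_1,w_2)$, the admissible $(w_3,w_4)$ are therefore residue pairs scrambled by multiplication by $w_1$ modulo $w_2$, not the lattice points of a cone.

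\emph{Bounded coordinates do not help.} Suppose instead $c_1,c_3,c_4$ stay bounded independently of $\ww$. Then the nonzero differences $\uu-\cc$, $\vv-\cc$, $\zz-\cc$ have their first, third and fourth entries in a fixed finite set. So $\ww\cdot(\uu-\cc)=0$ forces $w_2$ to divide one of finitely many fixed nonzero linear forms in $w_1,w_3,w_4$, which is a density-zero condition. Hence Step 3 has no cone to count lattice points in. The factorization $\frac{3}{40\pi^2}=\frac{6}{\pi^2}\cdot\frac1{80}$ is numerically right but for a different reason. Note also that $c_3=0$ or $c_4=0$ is fatal: the configuration then lives in three variables, and Proposition~\ref{prop:three-vars} bounds $e$.

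\textbf{What the paper does instead.} It builds the modular structure in from the start.
\begin{itemize}
\item The growing coordinate sits on the variable of largest weight $m$, so every condition becomes a congruence modulo $m$.
\item The weights are parametrized as $\ww=(q,\overline{qa},\overline{qb},m)$ with $q$ a unit modulo $m$.
\item With $A=2a-b$, $B=2b-a-m$ and $M=\max\{A,B\}$, it takes $\cc=(M,1,1,t)$, $\uu=(M-A,3,0,t+\rho)$, $\vv=(M-B,0,3,t+\sigma)$ and $\zz=(M+A+B,0,0,t-\rho-\sigma)$.
\item Rigidity reduces to excluding $\delta_1+a\delta_2+b\delta_3\equiv0\pmod m$ with $|\delta_1|\le M$ and $\delta_2,\delta_3\in\{-1,0,1\}$ not both zero. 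The separation condition $M<\Delta_m(a,b)$ does exactly this.
\end{itemize}
The $1/80$ is the area of the quadrilateral $P_1$ of admissible normalized residue pairs $(a,b)$, not a volume in weight space. Since $(a,b)\mapsto(\overline{qa},\overline{qb})$ is injective for each unit $q$, the count is $\sum_{m\le N}\phi(m)\,\#(P_m\cap\ZZ^2)\sim\frac{3}{160\pi^2}N^4$. Multiplying by $4$ for the position of the unique largest weight gives the stated constant. Your Step 3 minimality argument is fine as is; it is Lemma~\ref{lem:obstruction}.
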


 These examples show that the usual positivity heuristic and nonstandard Koszul-ness are more subtle in the multigraded setting. In particular, this seems to highlight how for nonstandard gradings  the concepts of ``sufficiently large'' and ``sufficiently divisible'' diverge. Sufficiently divisible Veronese subrings are eventually nonstandard Koszul, but sufficiently large ones need not be.  

 The only remaining case of Conjecture~\ref{con:DEM} is $n=3$. Proposition~\ref{prop:three-vars} shows that, for any fixed three-variable grading, cubic obstruction configurations occur in only finitely many degrees; thus our method cannot decide Conjecture~\ref{con:DEM} in this case. Further, perhaps Conjecture~\ref{con:DEM} can be corrected by requiring the weights be ``nice'' in some sense. Recent work of Banks and Ramkumar on varieties of minimal degree in weighted projective space suggests one possible definition of ``nice'' might be that $w_1 =1$ and $w_{i} |w_{i+1}$, i.e. what they call divisible weight \cite[Definition~1.1]{mayaRitvik26}. Interestingly, cubic obstruction configurations cannot occur when the weights are divisible (see Remark~\ref{rem:divisible-weights}). It is unclear what to expect in both of these cases, and we would not wager either way. 
 
 \subsection*{Acknowledgements} I am grateful to Maya Banks, Caitlin Davis, Daniel Erman, Boyana Martinova, and Alexandra Seceleanu for generous conversations from which this paper grew and for comments on an earlier draft. Computations in \textit{Macaulay2} \cite{M2} were essential to this article. ChatGPT 5.6 Sol and Claude Opus 5 were used for proofreading.

\section{The Combinatorial Construction}

Fix $n \geq 1$ and let $S=\KK[x_1,\ldots,x_n]$ be the positively $\ZZ$-graded polynomial ring where $\deg(x_i) = w_i \in \ZZ_{\geq1}$. Fix an integer $e\geq 1$, and let $\mfm \subset S^{(e)}$ denote the homogeneous maximal ideal. Let $\Gamma$ denote the set of monomials in $\mfm \setminus \mfm^2$. Thus, $\Gamma$ is the set of indecomposable nonconstant monomials of $S^{(e)}$, and its residue classes form a $\KK$-basis of $\mfm/\mfm^2$. Consequently, if $R = \Sym^{\bullet}\left(\mfm/\mfm^{2}\right)$, which is a standard $\ZZ$-graded polynomial ring, then  $R\cong \KK[T_{f} | f \in \Gamma]$. There is a canonical graded presentation
\[
\begin{tikzcd}[row sep = .25em, column sep = 3.75em]
R \arrow[r, two heads, "\pi"] &  \gr_{\mfm}\left(S^{(e)}\right)  & \text{given by} &
T_{f} \arrow[r, mapsto] & f+\mfm^2
\end{tikzcd},
\]
and so $\gr_{\mfm}(S^{(e)}) \cong R/J$, where $J=\ker(\pi)$. Since $\pi$ is graded, we may break $\pi$ into its graded pieces $\pi_{d}:R_{d} \to \mfm^{d}/\mfm^{d+1}$ and $J_d = \ker(\pi_d)$ for all integers $d\geq0$. As $\mfm^{d}$ and $\mfm^{d+1}$ are monomial ideals, the classes of the monomials in $\mfm^{d}\setminus \mfm^{d+1}$ form a $\KK$-basis for $\mfm^{d}/\mfm^{d+1}$. 
\begin{lemma}[Cubic Obstruction]\label{lem:obstruction}
	Let $f, g_{1},g_{2},g_{3} \in S$ be monomials of degree $e$ such that $f^{3}=g_1g_2g_3$ and $T_{f}^{3} \neq T_{g_1}T_{g_2}T_{g_3}$. If every degree-$e$ monomial divisor of $f^2$ is equal to $f$ then $T_{f}^{3}-T_{g_1}T_{g_2}T_{g_3}$ is a minimal cubic generator of $J$. Therefore, $S^{(e)}$ is not nonstandard Koszul. 
\end{lemma}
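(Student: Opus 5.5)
The plan is to exploit the fine $\ZZ^n$-grading carried by everything in sight. First I check that the four monomials are minimal generators. Any element of $\mfm^2$ has $S$-degree at least $2e$, while $f,g_1,g_2,g_3$ have $S$-degree exactly $e$. Hence $f,g_1,g_2,g_3\in\Gamma$, and $T_f,T_{g_1},T_{g_2},T_{g_3}$ are variables of $R$. By hypothesis $T_f^3\neq T_{g_1}T_{g_2}T_{g_3}$, so $C\coloneqq T_f^3-T_{g_1}T_{g_2}T_{g_3}$ is a nonzero element of $R_3$.

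Next I show $C\in J$. Both $f^3$ and $g_1g_2g_3$ are products of three elements of $\mfm$, so they lie in $\mfm^3$. They have $S$-degree $3e$, while $\mfm^4$ lives in $S$-degrees $\geq 4e$, so neither lies in $\mfm^4$. Thus $\pi_3(T_f^3)$ and $\pi_3(T_{g_1}T_{g_2}T_{g_3})$ are the class of the same monomial $f^3=g_1g_2g_3$ in $\mfm^3/\mfm^4$, and $C\in J_3$.

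For minimality, I grade $R$ by $\ZZ^n$, giving $T_h$ the exponent vector of $h$. Then $\pi$ is homogeneous for this fine grading, where $\gr_\mfm(S^{(e)})$ is graded by monomials, so $J$ and $J_2$ are fine-graded. Since the $T_h$ form a basis of $\mfm/\mfm^2$, we have $J_1=0$, so $C$ is a minimal generator exactly when $C\notin R_1J_2$. Restricting to the fine degree of $f^3$, the space $(R_1J_2)_{f^3}$ is spanned by products $T_h q$ with $q\in (J_2)_{f^3/h}$ fine-homogeneous. I use the linear functional ``coefficient of $T_f^3$'', which equals $1$ on $C$, and aim to show it vanishes on every such $T_hq$.

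If $h\neq f$, then $T_f^3$ cannot appear in $T_hq$. If $h=f$, then $q$ has fine degree $f^2$, so $q$ is a combination of monomials $T_aT_b$ with $ab=f^2$, where $a,b\in\Gamma$ divide $f^2$. By degree, $a$ and $b$ have $S$-degree $e$, so the midpoint hypothesis gives $a=b=f$. Hence $q=\lambda T_f^2$. But $\pi_2(T_f^2)$ is the class of $f^2$, which is nonzero since $f^2\notin\mfm^3$ by degree, so $q\in J_2$ forces $\lambda=0$. Therefore $C\notin R_1J_2$, and $C$ is a minimal cubic generator of $J$.

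Finally, because $J_1=0$, the presentation $R\to\gr_\mfm(S^{(e)})$ is the minimal one. By \cite[Remark~2.6(1)]{CDR13}, Koszulness of $\gr_\mfm(S^{(e)})$ would force $J$ to be generated by quadrics, which the cubic $C$ contradicts. So $S^{(e)}$ is not nonstandard Koszul.

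The main point needing care is the minimality step. The key is to justify that $J$ and $R_1J_2$ are fine-graded, so one may work in the single fine degree $f^3$, and that every $T_h$ dividing a fine-degree-$f^3$ monomial has $S$-degree exactly $e$. The rest is degree bookkeeping.
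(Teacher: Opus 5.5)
Your proof is correct and follows essentially the paper's argument: both use $J_1=0$ to reduce minimality to showing $C\notin R_1J_2$. Both then observe that the coefficient of $T_f^3$ vanishes on $R_1J_2$, because midpoint rigidity together with $f^2\notin\mfm^3$ forces every element of $J_2$ to have zero $T_f^2$-coefficient. Your fine $\ZZ^n$-grading is extra bookkeeping that the paper avoids by reading off the $T_f^3$ and $T_f^2$ coefficients directly.
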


\begin{proof}
	Every monomial of $S$ of degree $e$ is indecomposable in $S^{(e)}$. Thus, the variables $T_{f}, T_{g_1}, T_{g_2}$, and $T_{g_3}$ all belong to $R$. The equality $f^{3}=g_1g_2g_3$ implies that $C\coloneqq T_{f}^{3}-T_{g_1}T_{g_2}T_{g_3} \in J$. The second assumption ensures $C\neq 0$ in $R$. 
	We first claim that no element of $J_2$ contains $T_f^2$ with nonzero coefficient.  Indeed, $f^2\in\mfm^2\setminus\mfm^3$, since every
monomial in $\mfm^3$ has weighted degree at least $3e$.  Thus, if a quadric in $J$ contained $T_f^2$, cancellation in $\mfm^2/\mfm^3$ would require another monomial $T_uT_v$ with $uv=f^2$. Because $u,v\in\Gamma\subset\mfm$ and $\deg(f^2)=2e$, both $u$ and $v$ have degree $e$.  The hypothesis then gives $u=v=f$, a contradiction.
	
Since $J_1=0$, the cubic $C$ fails to be a minimal generator if and only if $C \in R_{1}J_{2}$. Towards a contradiction assume $C$ were not a minimal generator and so $C =\sum_{i}  L_iQ_i$
 where $L_{i} \in R_1$ and $Q_{i} \in J_{2}$.  Since $T_{f}^{3}$ appears on the left-hand side, there must be an $i$ such that $T_{f}^{3}$ appears as a nonzero term in $L_iQ_i$. In particular, $Q_i\in J_2$ must contain $T_f^2$ with nonzero coefficient, which contradicts the preceding paragraph. Therefore $C$ is a minimal cubic generator of $J$. Finally, every Koszul algebra is quadratic, so $\gr_{\mfm}(S^{(e)})$ is not Koszul. Hence $S^{(e)}$ is not nonstandard Koszul. 
\end{proof}

\begin{remark}\label{rem:comb-reform}
	Under the bijection between degree-$e$ monomials and points of $\Lambda_e(\ww)$, the hypotheses of Lemma~\ref{lem:obstruction} say exactly that the corresponding exponent vectors $(\cc;\uu,\vv,\zz)$ form a cubic obstruction configuration. Indeed, $f^3=g_1g_2g_3$ is equivalent to $3\cc=\uu+\vv+\zz$, while a degree-$e$ monomial divisor of $f^2$ with exponent vector $\pp$ corresponds to a pair $\pp,2\cc-\pp\in\Lambda_e(\ww)$.
\end{remark}

\begin{proof}[Proof of Theorem~\ref{thm:main-ex}]
	Let $\ww=(1,4,7,9) \in \ZZ^4$. Let $e = 9k + 12$ for an integer $k\geq1$. Consider the following four monomials of degree $e$ in $S$:
	\[
	f = x_1x_2x_3x_4^{k}, \quad\quad g_{1} = x_{1}^{3}x_{4}^{k+1}, \quad\quad g_{2} = x_{2}^{3}x_{4}^{k}, \quad\quad g_{3} = x_3^{3}x_4^{k-1}.
	\]
	We now wish to apply Lemma~\ref{lem:obstruction} to these monomials. The identity $f^3 = g_{1}g_{2}g_{3}$ is immediate. Since these are nonzero distinct indecomposable monomials the corresponding cubic monomials in $R$ are distinct. It remains to check that the factorization of $f^2$ into degree $e$ monomials is the obvious one. Using vector notation, if $\cc=(1,1,1,k)$ is the exponent vector of $f$, such a factorization corresponds to  $2\cc=\uu+\vv$ for $\uu,\vv \in \Lambda_{e}(\ww)$. Write $\uu=\cc+\delta$ and $\vv=\cc-\delta$ where $\delta = (\delta_1,\delta_2,\delta_3,\delta_4)\in \ZZ^4$. Since $\cc=(1,1,1,k)$ the non-negativity of the coordinates of $\uu$ and $\vv$ imply $\delta_1,\delta_2,\delta_3 \in \{-1,0,1\}$ and $\cc,\uu,\vv$ having the same weighted degree forces 
	\begin{equation}\label{eq:main}
	\delta_1+4\delta_2+7\delta_3+9\delta_4 = 0.
	\end{equation}
	We must show $\delta_i=0$ for all $i$. Reducing this equation modulo 3 shows that $\delta_1+\delta_2+\delta_3$ must be divisible by $3$, meaning $\delta_1+\delta_2+\delta_3$ is either $-3$, $0$, or $3$. If $\delta_1+\delta_2+\delta_3$ is $3$ then $\delta_1=\delta_2=\delta_3=1$ and equation~\eqref{eq:main} becomes $12+9\delta_4=0$, which has no integer solutions. The same argument rules out the sum $\delta_1+\delta_2+\delta_3$ being $-3$, and so $\delta_1+\delta_2+\delta_3=0$. Substituting $\delta_1=-\delta_2-\delta_3$ into equation~\eqref{eq:main} and dividing each side by 3 simplifies to $\delta_2+2\delta_3+3\delta_4=0$. This means that $\delta_2 \equiv \delta_3 \pmod{3}$, however, as both are in $\{-1,0,1\}$ we have $\delta_2=\delta_3$. Combining this with $\delta_1+\delta_2+\delta_3=0$ implies that $\delta_1=\delta_2=\delta_3=0$ from which we also get that $\delta_4=0$. Thus $\delta=0$, and so $f$ is the only degree-$e$ monomial divisor of $f^2$. Lemma~\ref{lem:obstruction} now shows that $S^{(e)}$ is not
nonstandard Koszul. Since $e=9k+12$ with $k\geq1$ is equivalent to $e\geq21$ and $e\equiv3\pmod{9}$, this proves the theorem.
\end{proof}

\begin{proof}[Proof of Corollary~\ref{cor:more-vars}]
	For $n\geq4$ let $S=\KK[x_1,\ldots,x_n]$ where $\deg(x_1)=1$, $\deg(x_2)=4$, $\deg(x_3)=7$, $\deg(x_4)=9$, and $\deg(x_i)$ is any positive integer for $i\geq 5$. Going through the argument in the proof of Theorem~\ref{thm:main-ex}, one sees that the four monomials used there satisfy the conditions for Lemma~\ref{lem:obstruction} in this larger ring and hence give a counterexample here. 
\end{proof}

The next proposition explains the limitation of this construction in three variables. Midpoint rigidity forces two coordinates of $\cc$ to be small, while a nontrivial barycenter decomposition forces $\uu-\cc$, $\vv-\cc$, and $\zz-\cc$  to be larger in three distinct coordinate directions. Together these observations bound the degree in terms of the weights.

\begin{proposition}\label{prop:three-vars}
	Let $n=3$. Fix $\ww \in \ZZ_{>0}^3$ and let $W = \max\{\ww_1, \ww_2, \ww_3\}$. If $\Lambda_{e}(\ww)$ contains a cubic obstruction configuration then $1\leq e < 4W^{2}$. 
\end{proposition}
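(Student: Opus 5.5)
The plan is to turn both conditions in a cubic obstruction configuration $(\cc;\uu,\vv,\zz)$ into statements about the rank-two lattice
\[
L\coloneqq\{\delta\in\ZZ^3 \mid \ww\cdot\delta=0\}.
\]
By Remark~\ref{rem:comb-reform}, $\cc$ is midpoint-rigid exactly when no nonzero $\delta\in L$ satisfies $|\delta_i|\le c_i$ for $i=1,2,3$. Indeed, $\pp=\cc+\delta$ and $\qq=\cc-\delta$ are both nonnegative precisely when this box condition holds.

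\emph{Step 1: midpoint-rigidity forces two small coordinates.} The lattice $L$ contains the three vectors $(w_2,-w_1,0)$, $(w_3,0,-w_1)$ and $(0,w_3,-w_2)$. If $c_1\ge w_2$ and $c_2\ge w_1$, then $\delta=(w_2,-w_1,0)$ violates rigidity, and the same holds for the other two pairs. So for every pair $i\ne j$ we get $c_i<w_j\le W$ or $c_j<w_i\le W$. If two coordinates were both $\ge W$, the pair they form would give a contradiction. Hence, after relabeling so that $\{i,j,k\}=\{1,2,3\}$, we have $c_i,c_j\le W-1$, and $c_k$ is the only coordinate that can be large. The bound $e<4W^2$ then follows from
\[
e=w_ic_i+w_jc_j+w_kc_k < 2W^2+Wc_k
\]
once we show $c_k<2W$. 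So the real target is $c_k\le 2W-1$.

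\emph{Step 2: a nontrivial barycenter decomposition bounds $c_k$.} Write $\uu=\cc+\alpha$, $\vv=\cc+\beta$, $\zz=\cc+\gamma$ with $\alpha,\beta,\gamma\in L$, $\alpha+\beta+\gamma=0$, and not all zero. Nonnegativity gives $\alpha_i,\beta_i,\gamma_i\ge -c_i$. Since the three entries in coordinate $i$ sum to zero, each is also $\le 2c_i$, and likewise in coordinate $j$. So in the small directions all differences are bounded by $2(W-1)$. I will assume $c_k\ge 2W$ and derive a contradiction by producing a nonzero $\delta\in L$ with $|\delta_i|\le c_i$, $|\delta_j|\le c_j$, $|\delta_k|\le c_k$. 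The $k$-th coordinate is controlled because
\[
w_k\delta_k=-(w_i\delta_i+w_j\delta_j),
\]
so $|\delta_k|\le |\delta_i|+|\delta_j|$ times a ratio of weights. The candidates I would use are $\alpha,\beta,\gamma$ themselves and short combinations such as $\alpha-\beta$, together with the explicit generators from Step~1, chosen to cancel oversized $i$- or $j$-entries.

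\emph{Step 3: case analysis.} I would split according to the sign patterns of $(\alpha_i,\alpha_j)$, $(\beta_i,\beta_j)$, $(\gamma_i,\gamma_j)$. The heuristic in the paper says the three differences must be ``large in three distinct coordinate directions.'' I expect that, in each case, either some difference already fits in the box $|\delta_i|\le c_i$, $|\delta_j|\le c_j$, or the negative entries pin some coordinate $c_i$ or $c_j$ to a value that lets a generator such as $(w_k,0,-w_i)$ fit.

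\textbf{Main obstacle.} Step~3 is the hard part. The decomposition only gives one-sided bounds $\ge -c_i$, not the two-sided bounds $|\delta_i|\le c_i$ that rigidity needs, and getting a clean bound of exactly $2W$ on $c_k$ may need a careful choice of the combination. My first attempt would be to take the difference among $\alpha,\beta,\gamma$ with the smallest $i$-entry in absolute value and check the $j$-entry directly. If that fails, I would subtract multiples of the reduced generator $(w_j,-w_i,0)/\gcd(w_i,w_j)$ to bring $\delta_i$ into $[-c_i,c_i]$, accepting a weaker constant if needed.
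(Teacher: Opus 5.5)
Your Step 1 is correct and matches the paper. Your reduction to $c_k\le 2W-1$ is also a valid sufficient condition, and it is in fact true. There is a genuine gap, however: Steps 2--3 are not a proof. You never pin down how the barycenter decomposition produces a contradiction. Instead you leave open a sign-pattern case analysis and a lattice-reduction fallback that, by your own admission, might only yield a weaker constant, and that would not prove the stated bound $4W^2$. What you list only as a heuristic (``large in three distinct coordinate directions'') is exactly the argument needed, and it requires no case analysis and no box vector built from $\alpha-\beta$ or the Step 1 generators.

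Here is the missing mechanism, in four steps.

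\emph{Each difference is nonzero.} You need each of $\alpha,\beta,\gamma$ to be nonzero, not merely ``not all zero''. If $\alpha=0$, then $\gamma=-\beta$, so $\cc\pm\beta\in\Lambda_e(\ww)$, and rigidity forces $\beta=\gamma=0$, contradicting nontriviality.

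\emph{Each difference overshoots somewhere.} Every difference already satisfies entry $\ge -c_i$ in each coordinate, by nonnegativity of $\uu,\vv,\zz$. So a nonzero difference with all entries $\le c_i$ would lie in the forbidden box. Hence each of $\alpha,\beta,\gamma$ has a nonempty set of coordinates where its entry exceeds $c_i$.

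\emph{The overshoot sets are disjoint.} Two entries $>c_i$ in the same coordinate would force the third entry to be $<-2c_i$, which is impossible. So the three overshoot sets are nonempty and pairwise disjoint subsets of $\{1,2,3\}$, hence they are the three singletons. In particular, some difference, say $\alpha$, has $\alpha_k>c_k$ in the unique possibly-large coordinate $k$.

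\emph{The bound on $e$.} Then $u_k>2c_k$, so $2w_kc_k<w_ku_k\le e$. Writing $e=w_kc_k+w_ic_i+w_jc_j$, this gives $e<2(w_ic_i+w_jc_j)\le 4W(W-1)<4W^2$, which is the paper's argument.

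If you prefer your own route through $c_k$, combine $w_kc_k<w_k\alpha_k\le w_ic_i+w_jc_j$ with your refined Step 1 fact. That fact says $c_k\ge W$ forces $c_i,c_j<w_k$, which yields $c_k<2W$. But bounding $e$ directly is simpler.
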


\begin{proof}
	Let $S=\KK[x_1,x_2,x_3]$ with $\deg(x_i) = \ww_i$ for $\ww\in \ZZ^3_{\geq1}$.  Suppose we have a cubic obstruction configuration in degree $e\geq1$, i.e., vectors $\cc,\uu,\vv,\zz \in \Lambda_{e}(\ww)$ where $\cc$ is midpoint-rigid and $3\cc=\uu+\vv+\zz$ is a nontrivial barycenter decomposition. We will show that we can bound $e$ in terms of $\ww$. Set
	\[
	L(\ww) \coloneqq \{\delta \in \ZZ^3 \; | \; \ww\cdot \delta =  0\} \quad \quad \text{and} \quad \quad M(\ww,\cc) \coloneqq L(\ww) \cap \prod_{i=1}^{3} \left[-\cc_i, \cc_i\right].
	\]
Midpoint rigidity of $\cc$ is equivalent to $M(\ww,\cc) = \{\zero\}$. If $\cc_i, \cc_j\geq W$ for $i\neq j$, then the vector whose only nonzero entries are $\ww_j$ in the $i$-th coordinate and $-\ww_i$ in the $j$-th coordinate is contained in $M(\ww,\cc)$. Therefore, at most one coordinate of $\cc$ is greater than or equal to $W$.  
	
	Consider the difference vectors $\delta_{\uu}=\uu-\cc$, $\delta_{\vv}=\vv-\cc$, and $\delta_{\zz}=\zz-\cc$. Midpoint rigidity together with the nontriviality of the barycenter decomposition forces each of these differences to be nonzero. For example, if $\delta_{\uu}=0$ then $\uu=\cc$ and so $3\cc=\uu+\vv+\zz$ simplifies to $2\cc=\vv+\zz$ and midpoint rigidity implies $\vv=\zz=\cc$ contradicting the nontriviality of the barycenter decomposition. Further, each difference vector is in $L(\ww)$ and is coordinate-wise greater than or equal to $-\cc$. Define $E_{\uu} = \{i \; | \; (\delta_{\uu})_i>\cc_i\}$, $E_{\vv} = \{i \; | \; (\delta_{\vv})_i>\cc_i\}$, and $E_{\zz} = \{i \; | \; (\delta_{\zz})_i>\cc_i\}$. If any of these sets were empty, then the corresponding nonzero difference vector would lie in $M(\ww,\cc)$, contradicting midpoint rigidity. Moreover, these sets are pairwise disjoint. For example, if $i\in E_{\uu}\cap E_{\vv}$, then $(\delta_{\zz})_i =-(\delta_{\uu})_i-(\delta_{\vv})_i <-2\cc_i\leq-\cc_i$, contradicting $\zz_i\geq0$. Since $E_{\uu},E_{\vv},E_{\zz}$ are nonempty pairwise disjoint subsets of $\{1,2,3\}$, each is a singleton and together they exhaust all three coordinates. Choose $k$ so that $\cc_i<W$ for every $i\neq k$; this is possible because at most one coordinate of $\cc$ is at least $W$. After relabeling $\uu,\vv,\zz$, we may assume that $k\in E_{\uu}$. Then $\uu_k>2\cc_k$, and so $2\ww_k\cc_k<\ww_k\uu_k\leq\ww\cdot\uu=e$.  Since $\ww\cdot\cc=e$, it follows $e<2\sum_{i\neq k}\ww_i\cc_i<4W^2$. 
\end{proof}

\begin{remark}
	The constructions in \cite[Proposition~4.3]{CFHMNSS26} are examples of cubic obstruction configurations in the sense above. However, their examples are supported on only three variables. Proposition~\ref{prop:three-vars} shows that their configurations cannot directly be used to disprove Conjecture~\ref{con:DEM}.
\end{remark}

\begin{remark}\label{rem:divisible-weights}
	If the weights $\ww=(w_1,\ldots,w_n) \in \ZZ^{n}_{\geq1}$ are divisible in the sense of \cite[Definition~1.1]{mayaRitvik26}, i.e. after reordering $w_1=1$ and $w_{1} | w_{2} |\cdots | w_{n}$, then no cubic obstruction configurations occur. The divisibility of $\ww$ implies that every monomial supported on $x_1,\ldots,x_{n-1}$ of degree at least $w_{n}$ is divisible by a monomial of degree exactly $w_n$. This constrains midpoint rigidity; if  $\cc \in \Lambda_{e}(\ww)$ and $\cc$ is midpoint rigid then $c_{n}=0$ or $c_{n} = \lfloor \frac{e}{w_n} \rfloor$. This extremity forces any barycenter decomposition of $\cc$ to be trivial.
\end{remark}

\section{Density of Counterexamples}

Fix a positive integer $m$. For $z\in \ZZ$, write $\overline{z} \in \{0,1,\ldots,m-1\}$ for its nonnegative residue modulo $m$. For $\alpha \in \RR$, set $\|\alpha\|_{m} \coloneqq \min\{ |\alpha-km| \; : \: k \in \ZZ\}$, which is the distance from $\alpha$ to the nearest integer multiple of $m$.  For $(\alpha,\beta) \in \RR^2$ set
\[
\Delta_{m}(\alpha,\beta) \coloneqq \min\left\{ \|\alpha\|_{m}, \;\; \|\beta\|_{m}, \;\; \|\alpha+\beta\|_{m}, \;\; \|\alpha-\beta\|_{m} \right\}.
\]
The proof of Theorem~\ref{thm:density} amounts to a careful construction of cubic obstructions. The construction has two requirements. Certain linear inequalities ensure that the exponent vectors in the barycenter decomposition are nonnegative, while a modular separation condition rules out nontrivial midpoint decompositions. We package these requirements into the following open set: 
\[
P_{m} \coloneqq \left\{ (\alpha,\beta) \in (0,m)^{2} \;\; \bigg| \;\;
\begin{matrix}
0<2\alpha-\beta, \quad 
0<2\beta-\alpha-m, \\ 
\max\{2\alpha-\beta, 2\beta-\alpha-m\} < \Delta_m(\alpha,\beta)
\end{matrix}
\right\}.
\]
One checks that $P_{m}$ is the interior of the quadrilateral with vertices $(m/3, 2m/3)$, $(m/2, 3m/4)$, $(m/2, 5m/6)$, and $(2m/5, 4m/5)$. In particular, $P_{m}=mP_{1}$ and $\vol(P_{m})=m^2/80$. For each $m\geq2$, the construction below will produce an explicit injection 
\begin{equation}\label{eq:inj}
\left\{1\leq q < m \;\; \big| \;\; (q,m) = 1\right\} \times \left(P_m \cap \ZZ^2 \right) \hookrightarrow \cB \cap \left([1,m-1]^3 \times \{m\}\right)
\end{equation}
where $(q,(a,b))\mapsto (q, \overline{qa}, \overline{qb}, m)$. The content of this construction is to show that for the polynomial ring $S = \KK[x_1,x_2,x_3,x_4]$ with degree vector $ (q, \overline{qa}, \overline{qb}, m)$ we can find infinitely many values of $e$ for which we can invoke Lemma~\ref{lem:obstruction}. 

\begin{proof}[Proof of Theorem~\ref{thm:density}]
	Fix $m\geq 2$, choose an integer $1 \leq q < m$ that is a unit modulo $m$, and let $(a,b) \in P_m\cap\ZZ^2$. Set $\ww=\ww(q,a,b,m)\coloneqq (q, r_1,r_2,m)$, where $r_1 =\overline{qa}$ and $r_{2}=\overline{qb}$. Since $0<a,b<m$, the residues $r_1$ and $r_2$ are nonzero. Thus, $\ww \in \ZZ^{4}_{\geq1}$ and $\ww$ is primitive since $(q,m)=1$. Put $A = 2a-b$, $B=2b-a-m$, and $M = \max\{A,B\}$. 
	Since $(a,b) \in P_m$, by definition $A$ and $B$ are positive and 
	\begin{equation}\label{eq:min-m-dist} 
	0< M = \max\{2a-b, 2b-a-m\} < \Delta_{m}(a,b) =  \min\left\{ \|a\|_{m}, \;\; \|b\|_{m}, \;\; \|a+b\|_{m}, \;\; \|a-b\|_{m} \right\} \leq \frac{m}{2}.
\end{equation}
By the definition of $r_1$ and $r_2$ the integers 
	\[
	\rho \coloneqq \frac{Aq-2r_1+r_2}{m} \quad \quad \text{and} \quad \quad \sigma \coloneqq \frac{Bq + r_1 - 2r_2}{m}
	\]
	are well-defined integers. For every integer $t\geq  |\rho|+|\sigma|$ we will consider the integer vectors
	\[
	\cc \coloneqq (M,1,1,t), \;\; \uu \coloneqq (M-A, 3, 0, t+\rho), \;\; \vv = (M-B,0,3,t+\sigma), \;\; \text{and} \;\; \zz = (M+A+B, 0,0, t-\rho-\sigma),
	\]
	whose dependence on $t$ we suppress for ease of notation. If  $e_t \coloneqq \ww \cdot \cc= qM+r_1+r_2+tm$, then a direct computation shows that $\ww \cdot \uu = \ww \cdot \vv = \ww \cdot \zz = e_t$. Further, these vectors give a barycenter decomposition, $3\cc = \uu + \vv + \zz$. Since $\cc \neq \uu$ this is a nontrivial decomposition. 
	
	We claim that $\cc$ is not a midpoint of two distinct points of $\Lambda_{e_t}(\ww)$. Suppose that $\cc-\delta, \cc+\delta \in \Lambda_{e_t}(\ww)$ for some $ \delta \in \ZZ^4$. We will show that $\delta$ must be the zero vector. Nonnegativity of the coordinates of $\cc\pm\delta$ gives us that $|\delta_1| \leq M$ and $\delta_2,\delta_3 \in \{-1,0,1\}$. Equality of weighted degrees implies 
	\begin{equation}\label{eq-weight-deg}
	q\delta_1+ r_{1}\delta_2 + r_2\delta_3 + m\delta_4 = 0.
\end{equation}
Reducing \eqref{eq-weight-deg} modulo $m$ and using $r_1\equiv qa \pmod{m}$ and $r_2\equiv qb\pmod{m}$, we obtain
\[
q\left(\delta_1+a\delta_2+b\delta_3\right) \equiv 0 \pmod{m}.
\]
Since $q$ is a unit modulo $m$, it follows that
\begin{equation}\label{eq:delta-arg}
	\delta_1 + a \delta_2 + b\delta_3 \equiv 0 \pmod{m}.
\end{equation}
	If $(\delta_2,\delta_3) \neq (0,0)$ then, up to sign, $a \delta_2 + b\delta_3$ is one of $a$, $b$, $a+b$, or $a-b$. Consequently, combining \eqref{eq:min-m-dist} and \eqref{eq:delta-arg} gives 
	\[
	M < \|a\delta_2+b\delta_3\|_{m} = \|\delta_1\|_m = |\delta_1| \leq M,
	\]
	which is a contradiction. Thus, $\delta_2=\delta_3=0$. Equation~\eqref{eq:delta-arg} together with $|\delta_1|\leq M < m/2$ implies $\delta_1=0$, and from all of this \eqref{eq-weight-deg} gives $\delta_4=0$. 
	Therefore, Lemma~\ref{lem:obstruction} and Remark~\ref{rem:comb-reform}  give a cubic obstruction to $S^{(e_t)}$ being nonstandard Koszul for every $t\gg0$. Since $e_t$ tends to infinity as $t\to \infty$, the primitive weight vector $\ww$ belongs to $\cB$. 
	
	It remains to count the degrees produced this way. For $m\geq2$ the above construction gives a map displayed in \eqref{eq:inj} given by $(q,a,b,m)\mapsto (q, \overline{qa}, \overline{qb}, m)$. This is an injection; the first coordinate recovers $q$ and last recovers $m$. Once $q$ and $m$ are known, multiplication by $q^{-1}$ modulo $m$ recovers $a$ and $b$ from $r_1$ and $r_2$; because $0<a,b<m$, these residues determine $a$ and $b$ uniquely. 

To count the source of \eqref{eq:inj}, let $\overline{P}_1$ denote the closure of $P_1$. By Ehrhart's theorem for rational polygons, $\#(m\overline{P}_1\cap\ZZ^2) = \vol(P_1)m^2+O(m)$  \cite[Theorem~2.11]{beckRobins15}. The boundary $\partial(m\overline{P}_1)$ contains $O(m)$ lattice points, so the same estimate holds for the interior $\#(P_m\cap\ZZ^2)=\frac{m^2}{80}+O(m)$.
Summing over $m\leq N$ shows that 
\begin{align*}
\#\left(\cB \cap [1,N]^4\right) & \geq \sum_{m \leq N} \phi(m) \cdot \#\left(P_m \cap \ZZ^2\right) 
= \frac{1}{80}\sum_{m\leq N} m^2\phi(m) - O\left(\sum_{m\leq N} m\phi(m)\right).
\end{align*}
Finally, standard estimates,
 see \cite[Theorem~3.7]{apostol76}, show that 
\begin{equation}\label{eq-bound}
\#\left(\cB\cap [1,N]^{4}\right) \geq \frac{3}{160 \pi^2}N^4-O\left(N^3\log(N)\right).
\end{equation}
Every degree vector produced by this construction has a unique largest coordinate, namely its fourth coordinate $m$. Since $\cB$ is invariant under permutations of the coordinates, moving the fourth coordinate to each of the four possible positions produces four pairwise disjoint families of the same cardinality. Multiplying \eqref{eq-bound} by 4, dividing by $N^4$ and taking $\liminf$ proves the claim. 
\end{proof}

\begin{remark}
	When $m=9$, $q=1$ and $(a,b) = (4,7)$ the construction recovers the configuration used  in the proof of Theorem~\ref{thm:main-ex}, up to permuting $\uu$, $\vv$, $\zz$. With these choices one gets that $\ww=(1,4,7,9)$, $A=B=M=1$, $\rho=0$, and $\sigma=-1$, giving $\cc=(1,1,1,t)$, $\uu=(0,3,0,t)$, $\vv=(0,0,3,t-1)$, and $\zz=(3,0,0,t+1)$. 
	
\end{remark}

\bibliographystyle{alphaurl}
\bibliography{references.bib}

\end{document}